\newtheorem{theorem}{Theorem}[section]
\newtheorem{lemma}[theorem]{Lemma}
\newtheorem{cor}[theorem]{Corollary}
\theoremstyle{definition}
\newtheorem{definition}[theorem]{Definition}
\theoremstyle{remark}
\newtheorem{remark}[theorem]{\bf{Remark}}
\numberwithin{equation}{section}
\begin{document}

\title[Refinement of   seminorm and numerical radius inequalities]  {\Small{Refinement of  seminorm and numerical radius inequalities  of semi-Hilbertian space operators}}

%    Information for first author
\author{  Pintu Bhunia, Kallol Paul and Raj Kumar Nayak }

\address{(Bhunia)Department of Mathematics, Jadavpur University, Kolkata 700032, India}
\email{pintubhunia5206@gmail.com}

\address{(Paul)Department of Mathematics, Jadavpur University, Kolkata 700032, India}
\email{kalloldada@gmail.com}

\address{(Nayak)
Department of Mathematics, Jadavpur University,
Kolkata 700032, West Bengal, India}
\email{rajkumarju51@gmail.com}

%\address{(Bag)Department of Mathematics, Vivekananda College For Women, Barisha, Kolkata 700008, India}
%\email{santanumath84@gmail.com}

%\thanks will become a 1st page footnote.
%\thanks{The research of Dr. Debmalya Sain is sponsored by Dr. D. S. Kothari Post-doctoral Fellowship
%under the mentorship of Professor Gadadhar Misra. Dr. Sain feels elated to acknowledge the
%delightful company and great hospitality of Mr. Anirban Dey, a former Bengal Champion in Table
%Tennis. Mr. Pintu Bhunia would like to thank UGC, Govt. of India for the financial support in the form of fellowship. The 
%authors are grateful to Vladimir Kadets for his helpful suggestions.}
%\thanks{The first author was supported in part by NSF Grant \#000000.}
%\thanks{Mr. Pintu Bhunia would like to thank UGC, Govt.of India for the financial support.}
%\thanks{}
%    Information for second author

%    General info
\thanks{First and third author would like to thank UGC, Govt. of India for the financial support in the form of  research fellowship. Prof. Kallol Paul would like to thank RUSA 2.0, Jadavpur University for the partial support.}
%\thanks{}
%\thanks{}
%    Information for second author

%    General info

\subjclass[2010]{Primary 47A12, Secondary  47A30, 47A63.}
\keywords{ A-numerical radius; A-adjoint operator; A-selfadjoint operator, Positive operator.}
%    Information for second author

%    General info

\date{}
\maketitle
\begin{abstract}
We give new inequalities for $A$-operator seminorm and $A$-numerical radius of semi-Hilbertian space operators  and show that the inequalities obtained  here generalize and improve on the existing ones. Considering a complex Hilbert space $\mathcal{H}$ and a non-zero positive bounded linear operator $A$ on $\mathcal{H},$  we show with among other seminorm inequalities, if  $S,T,X\in \mathcal{B}_A(\mathcal{H})$, i.e., if $A$-adjoint of $S,T,X$ exist then 
$$2\|S^{\sharp_A}XT\|_A \leq \|SS^{\sharp_A}X+XTT^{\sharp_A}\|_A.$$
Further, we prove that if $T\in \mathcal{B}_A(\mathcal{H})$ then
\begin{eqnarray*}
\frac{1}{4}\|T^{\sharp_{A}}T+TT^{\sharp_{A}}\|_A \leq  \frac{1}{8}\bigg( \|T+T^{\sharp_{A}}\|_A^2+\|T-T^{\sharp_{A}}\|_A^2\bigg), ~~\textit{and}
\end{eqnarray*}
\begin{eqnarray*}
\frac{1}{8}\bigg( \|T+T^{\sharp_{A}}\|_A^2+\|T-T^{\sharp_{A}}\|_A^2\bigg) +\frac{1}{8}c_A^2\big(T+T^{\sharp_{A}}\big)+\frac{1}{8}c_A^2\big(T-T^{\sharp_{A}}\big)
\leq w^2_A(T).
\end{eqnarray*}
Here $w_A(.), c_A(.)$  and $\|.\|_A $ denote $A$-numerical radius, $A$-Crawford number and $A$-operator seminorm, respectively.

\end{abstract}

\section{Introduction}
%\|T\|=\sup_{\substack{x\in \mathcal{H},\\ \|x\|= 1}} \|Tx\|,~~

\noindent
Let $\mathcal{B}(\mathcal{H})$ denote the $C^{\ast}$-algebra of all bounded linear operators acting on a complex Hilbert space $\mathcal{H}$ with inner product $\langle \cdot , \cdot \rangle$ and $\|\cdot\| $ is the norm induced from the inner product $\langle \cdot , \cdot \rangle$.  For $T \in \mathcal{B}(\mathcal{H})$, let $\|T\|,$ $w(T)$ and $c(T)$ denote the operator norm, the numerical radius and the Crawford number of $T$, respectively. Note that 
$$w(T)=\sup_{\substack{x\in \mathcal{H},\\ \|x\|= 1}} |\langle Tx,x\rangle | ~~\mbox{and}~~ c(T)=\inf_{\substack{x\in \mathcal{H},\\ \|x\|= 1}} |\langle Tx,x\rangle |.$$
The range space and the null space of $T$ are denoted by $\mathcal{R}(T)$ and  $\mathcal{N}(T)$, respectively. Let $T^*$ be the adjoint of $T$ and $|T|=(T^*T)^{\frac{1}{2}}$. 
We reserve the letter $A$ for a non-zero positive bounded linear operator on $\mathcal{H}$ and so  $\langle Ax , x\rangle\geq 0,\;\forall\;x\in \mathcal{H}.$ Consider the semi-inner product $\langle \cdot, \cdot \rangle_A$ on $\mathcal{H}$ induced by $A$, namely, $$\langle x , y\rangle_{A} =\langle Ax , y\rangle,~~\forall\;x,y\in \mathcal{H}.$$ 
 The semi-inner product $\langle \cdot, \cdot \rangle_A$ induces a seminorm $\| \cdot \|_A$  on $\mathcal{H}$   given by $\|x\|_A=\sqrt{\langle x,x\rangle_A}$, $\forall~~x\in \mathcal{H}$. This makes $\mathcal{H}$ a semi-Hilbertian space. One can verify that $\|\cdot\|_A$ is a norm on $\mathcal{H}$ if and only if $A$ is injective. And $(\mathcal{H},\|\cdot\|_A)$ is complete if and only if $\mathcal{R}(A)$ is closed in $\mathcal{H}$. An operator $ T \in \mathcal{B}(\mathcal{H}) $ is said to be $A$-bounded if there exists $c >0$ such that $ \|Tx\|_A \leq c \|x\|_A, \forall x \in \mathcal{H}.$ Now we define the $A$-adjoint operator.
\begin{definition} 
An operator $S\in\mathcal{B}(\mathcal{H})$ is said to be an $A$-adjoint of $T \in \mathcal{B}(\mathcal{H})$  if for every $x,y\in \mathcal{H}$, the identity $\langle Tx , y\rangle_A=\langle x , Sy\rangle_A$ holds, i.e., $S$ is a solution of the equation $AX=T^*A$.
\end{definition}
\noindent The existence of an $A$-adjoint operator is not guaranteed. The set of all operators which admit $A$-adjoint is denoted by $\mathcal{B}_{A}(\mathcal{H})$. By Douglas Theorem \cite{doug}, we  have
\begin{align*}
\mathcal{B}_{A}(\mathcal{H}) = \left\{T\in \mathcal{B}(\mathcal{H})\,:\;T^{*}\left(\mathcal{R}(A)\right)\subseteq \mathcal{R}(A)\right\}.
\end{align*}
 If $T\in \mathcal{B}_A(\mathcal{H})$, the reduced solution of the equation $AX=T^*A$ is a distinguished $A$-adjoint operator of $T$, which is denoted by $T^{\sharp_A}$. Note that, $T^{\sharp_A}=A^\dag T^*A$ where $A^\dag$ is the Moore-Penrose inverse of $A$. 
Again, by applying Douglas theorem, it can observed that
\begin{align*}
\mathcal{B}_{A^{1/2}}(\mathcal{H}) = \left\{T\in \mathcal{B}(\mathcal{H})\,:\;T^{*}\left(\mathcal{R}(A^{1/2})\right)\subseteq \mathcal{R}(A^{1/2})\right\}.
\end{align*}
It is easy to check that the collection of all $A$-bounded operators in $\mathcal{B}(\mathcal{H})$ is $ \mathcal{B}_{A^{1/2}}(\mathcal{H})$, i.e., 
 $$\mathcal{B}_{A^{1/2}}(\mathcal{H}) = \{T \in \mathcal{B}(\mathcal{H}) : \exists~~ \lambda > 0 ~~\mbox{such that}~~  \|Tx\|_A \leq \lambda \|x\|_A, ~~\forall ~~x \in \mathcal{H}\}.$$ 
We note that $\mathcal{B}_{A}(\mathcal{H})$ and $\mathcal{B}_{A^{1/2}}(\mathcal{H})$ are two subalgebras of $\mathcal{B}(\mathcal{H})$. Moreover, the following inclusion holds $$\mathcal{B}_{A}(\mathcal{H})\subseteq\mathcal{B}_{A^{1/2}}(\mathcal{H})\subseteq
\mathcal{B}(\mathcal{H}).$$  
Further, the semi-inner product induces the following $A$-operator seminorm on $\mathcal{B}_{A^{1/2}}(\mathcal{H})$,
\begin{equation*}
\|T\|_A=\sup_{\substack{x\in \overline{\mathcal{R}(A)},\\ x\not=0}}\frac{\|Tx\|_A}{\|x\|_A}=\sup_{\substack{x\in \mathcal{H},\\ \|x\|_{A}= 1}} \|Tx\|_{A}.
\end{equation*}
For $T \in \mathcal{B}_{A^{1/2}}(\mathcal{H})$, the $A$-numerical radius and the $A$-Crawford number of $T$, denoted as $w_A(T)$ and $c_A(T)$, are defined respectively as 
\[ w_A(T)=\sup_{\substack{x\in \mathcal{H},\\ \|x\|_{A}= 1}} |\langle Tx , x\rangle_A|$$ and $$c_A(T)=\inf_{\substack{x\in \mathcal{H},\\ \|x\|_{A}= 1}} |\langle Tx , x\rangle_A|. \]
Here we note that if we consider $A=I$ then $\|T\|_A=\|T\|$, $w_A(T)=w(T)$ and $c_A(T)=c(T)$.
It is well-known that if $T\in \mathcal{B}_{A}(\mathcal{H})$ then the following inequality holds
\begin{eqnarray}\label{1.1}
\frac{\|T\|_A}{2}\leq w_A(T) \leq \|T\|_A.
\end{eqnarray}
Zamani in \cite{Z} improved on the inequality (\ref{1.1}) to prove that
\begin{eqnarray}\label{1.2}
\frac{1}{4}  \|T^{\sharp_{A}}T+TT^{\sharp_{A}} \|_A\leq w^2_A(T) \leq \frac{1}{2}\|T^{\sharp_{A}}T+TT^{\sharp_{A}}\|_A.
\end{eqnarray}
Various other refinements of (\ref{1.1}) have also been obtained, we refer the interested readers to \cite {BPN, BNP, BP2,BFP, BNP2, kais01}. It is useful to recall that $T\in \mathcal{B}(\mathcal{H})$ is said to be $A$-selfadjoint if $AT$ selfadjoint, i.e., $AT=T^*A$  and it is called $A$-positive if $AT$ positive. It is clear that if $T\in \mathcal{B}(\mathcal{H})$ is $A$-selfadjoint then $T\in \mathcal{B}_A(\mathcal{H})$. It is well-known that if $T\in \mathcal{B}(\mathcal{H})$ is $A$-selfadjoint then $\|T\|_A=w_A(T)$. Here we also note that if $S,T\in \mathcal{B}_A(\mathcal{H})$ then $(ST)^{\sharp_A}=T^{\sharp_A}S^{\sharp_A}$, $\|ST\|_A\leq \|S\|_A\|T\|_A$ and $\|Sx\|_A\leq \|S\|_A\|x\|_A$ for all $x\in \mathcal{H}$. For more information related to $A$-adjoint operators, we refer to \cite{Arias1}.

\smallskip
In this paper, we develop many $A$-operator seminorm and $A$-numerical radius inequalities for an operator  $T\in \mathcal{B}_A(\mathcal{H})$ improving the existing inequalities.
In particular, we show that if $S,T,X\in \mathcal{B}_A(\mathcal{H})$, then
$$2 \|S^{\sharp_A}XT\|_A \leq \|SS^{\sharp_A}X+XTT^{\sharp_A}\|_A.$$
Further, we obtain that if $T\in \mathcal{B}_A(\mathcal{H})$, then
\begin{eqnarray*}
\frac{1}{4}\|T^{\sharp_{A}}T+TT^{\sharp_{A}}\|_A+\frac{1}{8}c_A^2\big(T+T^{\sharp_{A}}\big)+\frac{1}{8}c_A^2\big(T-T^{\sharp_{A}})\leq w^2_A(T),
\end{eqnarray*}
\begin{eqnarray*}
\frac{1}{4}\|T^{\sharp_{A}}T+TT^{\sharp_{A}}\|_A &\leq & \frac{1}{4\sqrt{2}} \bigg(\big\|T+T^{\sharp_{A}}\big\|_A^4+\big \|T-T^{\sharp_{A}}\big\|_A^4\bigg)^{\frac{1}{2}} \leq w^2_A(T).
\end{eqnarray*}
 We also prove that if $T\in \mathcal{B}_A(\mathcal{H})$ and $A(T+T^{\sharp_{A}})^2 (T-T^{\sharp_{A}})^2=0,$  then $$w^2_A(T)  = 	\frac{1}{2}\|T^{\sharp_{A}}T+TT^{\sharp_{A}}\|_A.$$ 
 
 \smallskip

\noindent The technique that we use  in developing the inequalities is little different from the one used in earlier works like \cite{BPN,MXZ,Z}, we briefly discuss it here.  The semi-inner product $\langle\cdot ,\cdot\rangle_A$ induces an inner product on the quotient space $\mathcal{H}/\mathcal{N}(A)$ defined as
$$[\overline{x},\overline{y}]=\langle Ax , y\rangle,$$
for all $\overline{x}= x + \mathcal{N}(A),\overline{y}= y + \mathcal{N}(A) \in \mathcal{H}/\mathcal{N}(A)$. Note that $(\mathcal{H}/\mathcal{N}(A),[\cdot,\cdot])$ is not complete unless $\mathcal{R}(A)$ is closed in $\mathcal{H}$.  L. de Branges and J. Rovnyak  \cite{branrov} showed that the completion of $\mathcal{H}/\mathcal{N}(A)$    is isometrically isomorphic to the Hilbert space $\mathcal{R}(A^{1/2})$
with the inner product
$$(A^{1/2}x,A^{1/2}y)=\langle P_{\overline{\mathcal{R}(A)}}x , P_{\overline{\mathcal{R}(A)}}y\rangle,\;\forall\, x,y \in \mathcal{H}.$$ Here $P_{\overline{\mathcal{R}(A)}}$ denotes the projection onto $\overline{\mathcal{R}(A)}$. 
The Hilbert space $\left(\mathcal{R}(A^{1/2}), (\cdot,\cdot)\right)$ is denoted by $\mathbf{R}(A^{1/2})$ and we use the symbol
$\|\cdot\|_{\mathbf{R}(A^{1/2})}$ to represent the norm induced by the inner product $(\cdot,\cdot)$.  For more information related to the Hilbert space $\mathbf{R}(A^{1/2})$, we refer the interested readers to \cite{acg3}. Note that the fact $\mathcal{R}(A)\subseteq \mathcal{R}(A^{1/2}) $ implies that $(Ax,Ay)=\langle x  , y\rangle_A$.
This implies the useful relation $$\|Ax\|_{\mathbf{R}(A^{1/2})}=\|x\|_A,\;\forall\,x\in \mathcal{H}.$$  To proceed further we need the following lemma which gives a nice connection between  $T\in \mathcal{B}_{A^{1/2}}(\mathcal{H})$ and $\widetilde{T}\in \mathcal{B}(\mathbf{R}(A^{1/2}))$. 

\begin{lemma} $($\cite[Prop. 3.6]{acg3}$)$  \label{lem1}
Let $T\in \mathcal{B}(\mathcal{H})$ and let $Z_{A}: \mathcal{H} \rightarrow \mathbf{R}(A^{1/2})$ be defined by $Z_{A}x = Ax,~~\forall~~x\in \mathcal{H}$. Then $T\in \mathcal{B}_{A^{1/2}}(\mathcal{H})$ if and only if there exists unique $\widetilde{T}\in \mathcal{B}(\mathbf{R}(A^{1/2}))$ such that $Z_AT =\widetilde{T}Z_A$.
\end{lemma}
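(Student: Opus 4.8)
The plan is to construct $\widetilde{T}$ by transporting $T$ across the map $Z_A$, using two properties of $Z_A$ that come straight from the isometric relation $\|Ax\|_{\mathbf{R}(A^{1/2})}=\|x\|_A$ recorded before the lemma. First, $Z_A$ is a bounded operator from $\mathcal{H}$ into $\mathbf{R}(A^{1/2})$, since $\|Z_Ax\|_{\mathbf{R}(A^{1/2})}=\|x\|_A\le\|A\|^{1/2}\|x\|$. Second, $\mathcal{R}(Z_A)=\mathcal{R}(A)$ is a dense subspace of $\mathbf{R}(A^{1/2})$: given $v=A^{1/2}\xi\in\mathbf{R}(A^{1/2})$ one picks $\xi_n\in\mathcal{R}(A^{1/2})$ with $\xi_n\to\xi$ in $\mathcal{H}$; then $A^{1/2}\xi_n\in\mathcal{R}(A)=\mathcal{R}(Z_A)$ and $\|A^{1/2}\xi_n-v\|_{\mathbf{R}(A^{1/2})}=\|P_{\overline{\mathcal{R}(A)}}(\xi_n-\xi)\|\to0$. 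Since $\mathbf{R}(A^{1/2})$ is complete, a bounded linear map defined on this dense subspace extends uniquely to all of $\mathbf{R}(A^{1/2})$; this extension mechanism is what produces $\widetilde{T}$.

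For the easy implication, suppose some $\widetilde{T}\in\mathcal{B}(\mathbf{R}(A^{1/2}))$ satisfies $Z_AT=\widetilde{T}Z_A$. Then for every $x\in\mathcal{H}$ we get $\|Tx\|_A=\|Z_ATx\|_{\mathbf{R}(A^{1/2})}=\|\widetilde{T}Z_Ax\|_{\mathbf{R}(A^{1/2})}\le\|\widetilde{T}\|\,\|Z_Ax\|_{\mathbf{R}(A^{1/2})}=\|\widetilde{T}\|\,\|x\|_A$, so $T$ is $A$-bounded, i.e. $T\in\mathcal{B}_{A^{1/2}}(\mathcal{H})$.

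For the converse, assume $\|Tx\|_A\le\lambda\|x\|_A$ for all $x$ and define a map on $\mathcal{R}(Z_A)=\mathcal{R}(A)$ by $Ax\mapsto ATx$. It is well defined because $Ax=Ay$ forces $\|x-y\|_A=0$, whence $\|AT(x-y)\|_{\mathbf{R}(A^{1/2})}=\|T(x-y)\|_A\le\lambda\|x-y\|_A=0$; it is clearly linear; and it is bounded with constant $\lambda$, since $\|ATx\|_{\mathbf{R}(A^{1/2})}=\|Tx\|_A\le\lambda\|x\|_A=\lambda\|Ax\|_{\mathbf{R}(A^{1/2})}$. By the density established above it extends uniquely to $\widetilde{T}\in\mathcal{B}(\mathbf{R}(A^{1/2}))$, and then $Z_ATx=ATx=\widetilde{T}(Ax)=\widetilde{T}Z_Ax$ for all $x$, i.e. $Z_AT=\widetilde{T}Z_A$. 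Uniqueness is immediate: any two operators intertwining $T$ through $Z_A$ agree on $\mathcal{R}(Z_A)$, which is dense.

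I expect the only genuinely delicate point to be the density of $\mathcal{R}(Z_A)=\mathcal{R}(A)$ in $\mathbf{R}(A^{1/2})$ (equivalently, that $\mathcal{R}(A)$ is a core for the $\mathbf{R}(A^{1/2})$-norm); everything after that is the routine extension-by-continuity argument, with the identity $\|Ax\|_{\mathbf{R}(A^{1/2})}=\|x\|_A$ supplying well-definedness, boundedness, and the intertwining identity at each stage.
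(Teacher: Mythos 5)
Your proof is correct, but note that the paper itself offers no proof of this lemma: it is quoted verbatim from \cite[Prop.~3.6]{acg3}, so there is nothing internal to compare against. What you have written is essentially the standard argument from that source: $Z_A$ is isometric from $(\mathcal{H},\|\cdot\|_A)$ into $\mathbf{R}(A^{1/2})$, its range $\mathcal{R}(A)$ is dense there, and $A$-boundedness of $T$ is exactly what makes $Ax\mapsto ATx$ well defined and bounded on that dense range, so extension by continuity produces the unique $\widetilde{T}$, while the reverse implication is the one-line norm estimate you give. The only wording to tighten is in the density step: for $v=A^{1/2}\xi$ you cannot in general choose $\xi_n\in\mathcal{R}(A^{1/2})$ with $\xi_n\to\xi$ in $\mathcal{H}$, since $\xi$ may have a component in $\mathcal{N}(A)$; but you may replace $\xi$ by $P_{\overline{\mathcal{R}(A)}}\xi$ without changing $v$ (as $\mathcal{N}(A)=\mathcal{N}(A^{1/2})$), and then your estimate $\|A^{1/2}\xi_n-v\|_{\mathbf{R}(A^{1/2})}=\|\xi_n-P_{\overline{\mathcal{R}(A)}}\xi\|\to 0$ goes through, since $\mathcal{R}(A^{1/2})$ is dense in $\overline{\mathcal{R}(A)}$. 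With that trivial adjustment the argument is complete.
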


\noindent There are many important well-known relations between $T$ and $\widetilde{T}$ , we mention a few of them  in the form of the following lemma.
\begin{lemma}\label{lem2}$($\cite[Prop. 2.9]{majsecesuci}$)$
	Let $T\in \mathcal{B}_A(\mathcal{H})$. Then\\
	$(i)~\widetilde{T^{\sharp_A}}=\big(\widetilde{T}\big)^*\;\text{ and }\; \widetilde{({T^{\sharp_A}})^{\sharp_A}}=\widetilde{T}.\\
	(ii)~\|T\|_A=\|\widetilde{T}\|_{\mathcal{B}(\mathbf{R}(A^{1/2}))}, ~ w_A(T)=w(\widetilde{T}) ~ \mbox{and}~~c_A(T)=c(\widetilde{T}).$ 
\end{lemma}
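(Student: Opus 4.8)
The plan is to transfer everything to the Hilbert space $\mathbf{R}(A^{1/2})$ through the intertwining relation $Z_AT=\widetilde{T}Z_A$ of Lemma \ref{lem1} and then exploit that $Z_A$ has dense range. The one preliminary fact I would record is that $Z_A(\mathcal{H})=\mathcal{R}(A)$ is dense in $\mathbf{R}(A^{1/2})$; this follows from the de Branges--Rovnyak description of $\mathbf{R}(A^{1/2})$ as the completion of $\mathcal{H}/\mathcal{N}(A)$ under the map $\overline{x}\mapsto Ax$, together with the isometry $\|Ax\|_{\mathbf{R}(A^{1/2})}=\|x\|_A$ recorded in the text. Every limiting argument below rests on this density and on the boundedness of $\widetilde{T}$ on $\mathbf{R}(A^{1/2})$.

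For $(i)$, I would fix $x,y\in\mathcal{H}$ and evaluate $\big(\widetilde{T^{\sharp_A}}Z_Ax,Z_Ay\big)$ in two ways. Applying the intertwining relation to $T^{\sharp_A}$ gives $\big(\widetilde{T^{\sharp_A}}Z_Ax,Z_Ay\big)=(Z_AT^{\sharp_A}x,Z_Ay)=(AT^{\sharp_A}x,Ay)=\langle T^{\sharp_A}x,y\rangle_A$. The defining property of the $A$-adjoint (together with the conjugate symmetry of $\langle\cdot,\cdot\rangle_A$, which holds since $A=A^*$) yields $\langle T^{\sharp_A}x,y\rangle_A=\langle x,Ty\rangle_A=(Ax,ATy)=(Z_Ax,\widetilde{T}Z_Ay)=\big((\widetilde{T})^*Z_Ax,Z_Ay\big)$. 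Thus the bounded operators $\widetilde{T^{\sharp_A}}$ and $(\widetilde{T})^*$ agree on the dense set $Z_A(\mathcal{H})$ in each variable, whence $\widetilde{T^{\sharp_A}}=(\widetilde{T})^*$. Replacing $T$ by $T^{\sharp_A}$ and taking the Hilbert-space adjoint in $\mathcal{B}(\mathbf{R}(A^{1/2}))$ then gives $\widetilde{({T^{\sharp_A}})^{\sharp_A}}=\big((\widetilde{T})^*\big)^*=\widetilde{T}$.

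For $(ii)$, the basic identities $\|Tx\|_A=\|\widetilde{T}Z_Ax\|_{\mathbf{R}(A^{1/2})}$, $\|x\|_A=\|Z_Ax\|_{\mathbf{R}(A^{1/2})}$ and $\langle Tx,x\rangle_A=(\widetilde{T}Z_Ax,Z_Ax)$, valid for all $x\in\mathcal{H}$, show that $\|T\|_A$, $w_A(T)$ and $c_A(T)$ equal, respectively, the supremum of $\|\widetilde{T}\xi\|_{\mathbf{R}(A^{1/2})}/\|\xi\|_{\mathbf{R}(A^{1/2})}$, the supremum of $|(\widetilde{T}\xi,\xi)|$, and the infimum of $|(\widetilde{T}\xi,\xi)|$, taken over $\xi\in Z_A(\mathcal{H})$ (with $\xi\neq 0$, resp. $\|\xi\|_{\mathbf{R}(A^{1/2})}=1$). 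Since $Z_A(\mathcal{H})$ is dense in $\mathbf{R}(A^{1/2})$ and the functions $\xi\mapsto\|\widetilde{T}\xi\|_{\mathbf{R}(A^{1/2})}$ and $\xi\mapsto|(\widetilde{T}\xi,\xi)|$ are continuous, passing to the closure replaces $Z_A(\mathcal{H})$ by the whole space (resp.\ its whole unit sphere), giving $\|\widetilde{T}\|_{\mathcal{B}(\mathbf{R}(A^{1/2}))}$, $w(\widetilde{T})$ and $c(\widetilde{T})$.

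I expect the only genuinely delicate point to be this last density step for the Crawford number: given a unit vector $\xi\in\mathbf{R}(A^{1/2})$ one approximates it by $\eta_n\in Z_A(\mathcal{H})$ and must then pass to the renormalized vectors $\eta_n/\|\eta_n\|_{\mathbf{R}(A^{1/2})}$ — still in $Z_A(\mathcal{H})$ and still converging to $\xi$ — in order to see that the infimum over the dense set of unit vectors does not exceed $|(\widetilde{T}\xi,\xi)|$; the reverse inequality is immediate. Everything else is a direct unwinding of the definitions of $T^{\sharp_A}$, $\|\cdot\|_A$, $w_A(\cdot)$, $c_A(\cdot)$ and of the relation $Z_AT=\widetilde{T}Z_A$.
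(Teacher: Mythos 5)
Your proof is correct. Note that the paper itself does not prove this lemma at all — it is quoted verbatim from \cite[Prop. 2.9]{majsecesuci} — and your argument (the intertwining relation $Z_AT=\widetilde{T}Z_A$ together with the density of $Z_A(\mathcal{H})=\mathcal{R}(A)$ in $\mathbf{R}(A^{1/2})$, applied weakly for the adjoint identity and via normalized approximating sequences for $\|\cdot\|_A$, $w_A$ and $c_A$) is essentially the standard proof given in that reference, so there is no divergence of method to report. The only point left tacit is that $T^{\sharp_A}\in\mathcal{B}_A(\mathcal{H})$ (so that $\widetilde{T^{\sharp_A}}$ and $(T^{\sharp_A})^{\sharp_A}$ exist); this is a known fact from the Douglas-theorem framework and is implicitly presupposed by the statement itself.
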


\noindent The following lemma is also obvious from the definition of $\widetilde{T}.$ 

\begin{lemma}\label{lem4}
	Let $S, T\in \mathcal{B}_{A^{1/2}}(\mathcal{H})$ and let $\lambda\in \mathbb{C}$ be any scalar. Then
	\[   \widetilde{S+\lambda T}=\widetilde{S}+\lambda \widetilde{T} ~~\mbox{and}~~ \widetilde{ST}=\widetilde{S}\widetilde{T}.\]
\end{lemma}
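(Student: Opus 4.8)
The plan is to obtain both identities directly from the defining relation $Z_{A}R=\widetilde{R}Z_{A}$ of Lemma \ref{lem1}, together with the uniqueness of $\widetilde{R}$ asserted there. First I would record that $\mathcal{B}_{A^{1/2}}(\mathcal{H})$ is a subspace and a subalgebra of $\mathcal{B}(\mathcal{H})$, as already noted in the Introduction, so that $S+\lambda T\in\mathcal{B}_{A^{1/2}}(\mathcal{H})$ and $ST\in\mathcal{B}_{A^{1/2}}(\mathcal{H})$; hence by Lemma \ref{lem1} the operators $\widetilde{S+\lambda T}$ and $\widetilde{ST}$ exist, are unique, and belong to $\mathcal{B}(\mathbf{R}(A^{1/2}))$. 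This is what makes the left-hand sides of the asserted identities meaningful.

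For the additivity statement, I would compute, using the linearity of $Z_{A}$ together with $Z_{A}S=\widetilde{S}Z_{A}$ and $Z_{A}T=\widetilde{T}Z_{A}$,
$$Z_{A}(S+\lambda T)=Z_{A}S+\lambda Z_{A}T=\widetilde{S}Z_{A}+\lambda\widetilde{T}Z_{A}=\big(\widetilde{S}+\lambda\widetilde{T}\big)Z_{A}.$$
Since $\widetilde{S}+\lambda\widetilde{T}\in\mathcal{B}(\mathbf{R}(A^{1/2}))$ and satisfies the relation characterizing $\widetilde{S+\lambda T}$, the uniqueness part of Lemma \ref{lem1} forces $\widetilde{S+\lambda T}=\widetilde{S}+\lambda\widetilde{T}$. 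For the multiplicativity statement the same kind of one-line manipulation works:
$$Z_{A}(ST)=(Z_{A}S)T=(\widetilde{S}Z_{A})T=\widetilde{S}(Z_{A}T)=\widetilde{S}\big(\widetilde{T}Z_{A}\big)=\big(\widetilde{S}\widetilde{T}\big)Z_{A},$$
and since $\widetilde{S}\widetilde{T}\in\mathcal{B}(\mathbf{R}(A^{1/2}))$, uniqueness again yields $\widetilde{ST}=\widetilde{S}\widetilde{T}$.

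There is no genuine obstacle here; the statement is essentially a formal consequence of the defining property of the map $T\mapsto\widetilde{T}$. The only point that deserves a word of justification is the appeal to uniqueness in Lemma \ref{lem1}: an operator in $\mathcal{B}(\mathbf{R}(A^{1/2}))$ is determined by its action on $\mathcal{R}(Z_{A})=\mathcal{R}(A)$, which is dense in $\mathbf{R}(A^{1/2})$, so two bounded operators agreeing after precomposition with $Z_{A}$ must coincide. I would therefore keep the write-up to these two displayed computations and the remark on uniqueness.
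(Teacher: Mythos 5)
Your argument is correct and is exactly the verification the paper has in mind when it declares the lemma ``obvious from the definition of $\widetilde{T}$'': compute $Z_A(S+\lambda T)=(\widetilde{S}+\lambda\widetilde{T})Z_A$ and $Z_A(ST)=(\widetilde{S}\widetilde{T})Z_A$, then invoke the uniqueness in Lemma \ref{lem1}, which is legitimate since $\mathcal{R}(Z_A)=\mathcal{R}(A)$ is dense in $\mathbf{R}(A^{1/2})$. Nothing is missing; your closure remark ($\mathcal{B}_{A^{1/2}}(\mathcal{H})$ being a subalgebra) is indeed the only prerequisite, and it is stated in the Introduction.
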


\noindent We end this section by elaborating the steps that will be used to  develop the $A$-operator seminorm and the $A$-numerical radius inequalities of semi-Hilbertian space operators.
\begin{itemize}
\item [{ \bf Step 1.}] Begin with $T\in \mathcal{B}_{A^{1/2}}(\mathcal{H}).$ 
\item [{ \bf Step 2.}] Consider the corresponding $\widetilde{T}\in \mathcal{B}(\mathbf{R}(A^{1/2})).$
\item [{ \bf Step 3.}] Look at the classical operator norm and  numerical radius inequalities for the operator $ \widetilde{T}$ acting on the Hilbert space $ \mathbf{R}(A^{1/2}).$
\item [{ \bf Step 4.}] Go back to $A$-operator seminorm and $A$-numerical radius inequalities for the operator $T\in \mathcal{B}_{A^{1/2}}(\mathcal{H}).$
\end{itemize}

\section{Main results}

We begin this section with the  following theorem that gives an $A$-operator seminorm inequality of the product of semi-Hilbertian space operators.
\begin{theorem}\label{th1}
Let $S,T,X\in \mathcal{B}_A(\mathcal{H}).$ Then
\[\|S^{\sharp_A}XT\|_A \leq \frac{1}{2}\|SS^{\sharp_A}X+XTT^{\sharp_A}\|_A. \]
\end{theorem}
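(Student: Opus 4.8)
The plan is to run the four–step transference scheme laid out in the introduction: turn the assertion into a classical operator norm inequality on the Hilbert space $\mathbf{R}(A^{1/2})$ and translate back. Since $S,T,X\in\mathcal{B}_A(\mathcal{H})\subseteq\mathcal{B}_{A^{1/2}}(\mathcal{H})$, Lemma \ref{lem1} produces $\widetilde{S},\widetilde{T},\widetilde{X}\in\mathcal{B}(\mathbf{R}(A^{1/2}))$. By Lemma \ref{lem2}(i), $\widetilde{S^{\sharp_A}}=(\widetilde{S})^{*}$ and $\widetilde{T^{\sharp_A}}=(\widetilde{T})^{*}$, so Lemma \ref{lem4} gives $\widetilde{S^{\sharp_A}XT}=(\widetilde{S})^{*}\widetilde{X}\widetilde{T}$ and $\widetilde{SS^{\sharp_A}X+XTT^{\sharp_A}}=\widetilde{S}(\widetilde{S})^{*}\widetilde{X}+\widetilde{X}\widetilde{T}(\widetilde{T})^{*}$. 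Using the norm identity $\|R\|_A=\|\widetilde{R}\|_{\mathcal{B}(\mathbf{R}(A^{1/2}))}$ of Lemma \ref{lem2}(ii), Theorem \ref{th1} becomes the purely Hilbertian statement
\begin{equation*}
\|B^{*}YC\|\le\tfrac12\|BB^{*}Y+YCC^{*}\|\qquad\text{for all }B,Y,C\in\mathcal{B}(\mathcal{K}),
\end{equation*}
applied on the Hilbert space $\mathcal{K}=\mathbf{R}(A^{1/2})$ with $B=\widetilde{S}$, $Y=\widetilde{X}$, $C=\widetilde{T}$.

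It remains to establish this classical arithmetic--geometric mean operator norm inequality (it is of Bhatia--Davis / Bhatia--Kittaneh type and could simply be quoted; I indicate a short proof). First, pass to $2\times2$ operator matrices over $\mathcal{K}\oplus\mathcal{K}$: let $\mathbf{D}=\mathrm{diag}(B,C)$ and let $\mathbf{Y}$ be the matrix whose $(1,2)$ entry is $Y$ and all of whose other entries vanish. Then $\mathbf{D}^{*}\mathbf{Y}\mathbf{D}$ has $(1,2)$ entry $B^{*}YC$ and all other entries zero, while $\mathbf{D}\mathbf{D}^{*}\mathbf{Y}+\mathbf{Y}\mathbf{D}\mathbf{D}^{*}$ has $(1,2)$ entry $BB^{*}Y+YCC^{*}$ and all other entries zero; since such a matrix has operator norm equal to the norm of its single nonzero entry, the general inequality reduces to its special case $B=C$, that is, to $\|D^{*}ZD\|\le\tfrac12\|DD^{*}Z+ZDD^{*}\|$. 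For the latter, write $D=P^{1/2}U$ with $P=DD^{*}$ and $U$ a partial isometry (polar decomposition), so that $\|D^{*}ZD\|\le\|P^{1/2}ZP^{1/2}\|$ and $DD^{*}Z+ZDD^{*}=PZ+ZP$; replacing $P$ by $P+\varepsilon I$ and letting $\varepsilon\to0^{+}$ at the end, we may assume $P$ invertible and invoke the Lyapunov representation $Z=\int_{0}^{\infty}e^{-tP}(PZ+ZP)e^{-tP}\,dt$. Then for unit vectors $x,y\in\mathcal{K}$,
\begin{equation*}
|\langle P^{1/2}ZP^{1/2}x,y\rangle|\le\|PZ+ZP\|\int_{0}^{\infty}\|e^{-tP}P^{1/2}x\|\,\|e^{-tP}P^{1/2}y\|\,dt,
\end{equation*}
and Cauchy--Schwarz together with $\int_{0}^{\infty}e^{-2tP}P\,dt=\tfrac12 I$ bounds the right-hand side by $\tfrac12\|PZ+ZP\|$; taking the supremum over $x,y$ yields $\|P^{1/2}ZP^{1/2}\|\le\tfrac12\|PZ+ZP\|$, and hence the claimed inequality.

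Finally, reading the Hilbert-space inequality back through Lemma \ref{lem2}(ii) gives $\|S^{\sharp_A}XT\|_A\le\tfrac12\|SS^{\sharp_A}X+XTT^{\sharp_A}\|_A$, which is Theorem \ref{th1}. The only substantive point is the classical inequality of the middle step, and within it the one place demanding care is extracting the operator-norm bound rather than a mere numerical-radius bound: this is precisely why one estimates the sesquilinear form $\langle P^{1/2}ZP^{1/2}x,y\rangle$ for independent $x$ and $y$, and not just the quadratic form $\langle P^{1/2}ZP^{1/2}x,x\rangle$. Everything else is the routine bookkeeping of the $T\leftrightarrow\widetilde{T}$ dictionary supplied by Lemmas \ref{lem1}, \ref{lem2}, and \ref{lem4}.
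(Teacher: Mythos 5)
Your proof is correct, and its overall skeleton is exactly the paper's four-step transference: pass from $S,T,X$ to $\widetilde{S},\widetilde{T},\widetilde{X}$ via Lemma \ref{lem1}, use Lemma \ref{lem2}(i) and Lemma \ref{lem4} to identify $\widetilde{S^{\sharp_A}XT}=(\widetilde{S})^*\widetilde{X}\widetilde{T}$ and $\widetilde{SS^{\sharp_A}X+XTT^{\sharp_A}}=\widetilde{S}(\widetilde{S})^*\widetilde{X}+\widetilde{X}\widetilde{T}(\widetilde{T})^*$, invoke the classical inequality $2\|B^*YC\|\le\|BB^*Y+YCC^*\|$ on $\mathbf{R}(A^{1/2})$, and translate back with Lemma \ref{lem2}(ii). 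Where you diverge is that the paper simply cites Bhatia--Davis \cite{BD} for that classical inequality, while you prove it: the $2\times 2$ block reduction $\mathbf{D}=\mathrm{diag}(B,C)$ with $\mathbf{Y}$ in the $(1,2)$ corner correctly collapses the two-operator statement to $\|D^*ZD\|\le\tfrac12\|DD^*Z+ZDD^*\|$, and your polar decomposition $D=P^{1/2}U$ with $P=DD^*$, the Lyapunov representation $Z=\int_0^\infty e^{-tP}(PZ+ZP)e^{-tP}\,dt$ (valid after the $\varepsilon$-regularization you perform), and the Cauchy--Schwarz estimate using $\int_0^\infty e^{-2tP}P\,dt=\tfrac12 I$ are all sound; estimating the bilinear form $\langle P^{1/2}ZP^{1/2}x,y\rangle$ with independent $x,y$ is indeed the right way to get a norm rather than a numerical-radius bound. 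What your longer route buys is self-containedness and, more substantively, an argument that works directly for operators on the (generally infinite-dimensional) Hilbert space $\mathbf{R}(A^{1/2})$, whereas \cite{BD} is stated for matrices and the paper tacitly relies on its known operator-theoretic extension; what the paper's route buys is brevity, since the entire content of Theorem \ref{th1} is the transference dictionary once the classical inequality is granted.
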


\begin{proof}
Since $\mathcal{B}_A(\mathcal{H}) \subseteq \mathcal{B}_{A{^{1/2}}}(\mathcal{H})$, so $S,T,X\in \mathcal{B}_{A{^{1/2}}}(\mathcal{H}).$ Therefore, it follows from Lemma \ref{lem1} that there exists unique $\widetilde{S}$ in $\mathcal{B}(\mathbf{R}(A^{1/2}))$ such that $Z_AS =\widetilde{S}Z_A$. Similarly, there exist $\widetilde{T}$ and $\widetilde{X}$ in $\mathcal{B}(\mathbf{R}(A^{1/2}))$ such that $Z_AT =\widetilde{T}Z_A$ and $Z_AX =\widetilde{X}Z_A$. Following \cite{BD}, we have if $S,T,X\in \mathcal{B}(\mathcal{H})$ then $2\|S^*XT\|\leq \|SS^*X+XTT^*\|.$ 
It follows that 
\begin{eqnarray*}
\|\big(\widetilde{S}\big)^*\widetilde{X} \widetilde{T}\|_{\mathcal{B}(\mathbf{R}(A^{1/2}))} &\leq & \frac{1}{2} \|\widetilde{S}\big(\widetilde{S}\big)^*\widetilde{X}+\widetilde{X}\widetilde{T} \big(\widetilde{T}\big)^*  \|_{\mathcal{B}(\mathbf{R}(A^{1/2}))}\\
\Rightarrow \|\widetilde{S^{\sharp_A}}\widetilde{X} \widetilde{T}\|_{\mathcal{B}(\mathbf{R}(A^{1/2}))} &\leq & \frac{1}{2} \|\widetilde{S}~~\widetilde{S^{\sharp_A}}\widetilde{X}+\widetilde{X}\widetilde{T} ~~\widetilde{T^{\sharp_A}}  \|_{\mathcal{B}(\mathbf{R}(A^{1/2}))},~~\mbox{by Lemma \ref{lem2}(i)}\\
\Rightarrow \|\widetilde{S^{\sharp_A}XT } \|_{\mathcal{B}(\mathbf{R}(A^{1/2}))} &\leq & \frac{1}{2}  \| \widetilde{SS^{\sharp_A}X}+\widetilde{XTT^{\sharp_A}}\|_{\mathcal{B}(\mathbf{R}(A^{1/2}))},~~\mbox{by Lemma \ref{lem4}}\\
\Rightarrow  \|S^{\sharp_A}XT\|_A &\leq& \frac{1}{2} \|SS^{\sharp_A}X+XTT^{\sharp_A}\|_A,~~\mbox{by Lemma \ref{lem2}(ii)}.
\end{eqnarray*}
This completes the proof.
\end{proof}

Considering $X=I$ in the above theorem we get the following corollary.
\begin{cor}\label{cor1}
Let $S,T\in \mathcal{B}_A(\mathcal{H}).$ Then
\[\|S^{\sharp_A}T\|_A \leq \frac{1}{2}\|SS^{\sharp_A}+TT^{\sharp_A}\|_A. \]
\end{cor}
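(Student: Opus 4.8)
\textbf{Proof proposal for Corollary \ref{cor1}.}

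The plan is to apply Theorem \ref{th1} directly with the choice $X = I$. First I would observe that the identity operator $I$ belongs to $\mathcal{B}_A(\mathcal{H})$, since $I^* = I$ trivially maps $\mathcal{R}(A)$ into itself, so $I$ admits an $A$-adjoint (indeed $I^{\sharp_A} = I$); hence the hypotheses of Theorem \ref{th1} are satisfied for the triple $S, T, I$. Substituting $X = I$ into the conclusion of Theorem \ref{th1} gives
\begin{eqnarray*}
\|S^{\sharp_A} I \, T\|_A \leq \frac{1}{2}\|SS^{\sharp_A} I + I \, TT^{\sharp_A}\|_A,
\end{eqnarray*}
and simplifying the products with $I$ yields exactly $\|S^{\sharp_A}T\|_A \leq \frac{1}{2}\|SS^{\sharp_A} + TT^{\sharp_A}\|_A$, which is the desired inequality.

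There is essentially no obstacle here: the only point requiring a moment's thought is the membership $I \in \mathcal{B}_A(\mathcal{H})$, which is immediate from the Douglas-type characterization $\mathcal{B}_A(\mathcal{H}) = \{T \in \mathcal{B}(\mathcal{H}) : T^*(\mathcal{R}(A)) \subseteq \mathcal{R}(A)\}$ recalled in the introduction. Everything else is a one-line specialization of the theorem already proved, so the corollary follows at once.
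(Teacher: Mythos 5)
Your proposal is correct and matches the paper's argument exactly: the paper obtains the corollary by putting $X=I$ in Theorem \ref{th1}, just as you do. Your extra remark that $I\in\mathcal{B}_A(\mathcal{H})$ (via the Douglas characterization, with $I^{\sharp_A}$ existing) is a harmless and valid clarification of a point the paper leaves implicit.
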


Proceeding in the same way  we can prove the following theorem by using the corresponding results  from  \cite[Th. 2.4, 2.7, 2.17]{BP}  and \cite[Cor. 3.16]{BP}, respectively.

\begin{theorem}\label{th2}
Let $S, T\in \mathcal{B}_A(\mathcal{H}).$ Then the following inequalities hold:
\begin{eqnarray*}
(i)~\|S+T\|_A &\leq & \bigg(\|S\|_A^2+\|T\|_A^2+ \|S^{\sharp_{A}}T+T^{\sharp_{A}}S\|_A \bigg)^{\frac{1}{2}}\leq \|S\|_A+\|T\|_A,\\
(ii)~\|S+T\|_A  &\leq & \bigg(\|S\|_A^2+\|T\|_A^2+\|S\|_A \|T\|_A+\min \big \{ w_A(S^{\sharp_{A}}T), w_A(ST^{\sharp_{A}}) \big \} \bigg)^{\frac{1}{2}}\\
& \leq &\|S\|_A+\|T\|_A,\\
(iii)~\|S+T\|_A^2 &\leq & \|S\|_A^2+\|T\|_A^2+\frac{1}{2}\|S^{\sharp_{A}}S+T^{\sharp_{A}}T\|_A+w_A(S^{\sharp_{A}}T),\\
(iv)~\|S+T\|_A^2 & \leq & \|S\|_A^2+\|T\|_A^2+\frac{1}{2}\|SS^{\sharp_{A}}+TT^{\sharp_{A}}\|_A+w_A(ST^{\sharp_{A}}),
\end{eqnarray*}
and 
\begin{eqnarray*}
(v)~\|ST^{\sharp_{A}}\|_A &\leq & \frac{1}{\sqrt{3}}\bigg\| \left(\frac{S^{\sharp_{A}}S+T^{\sharp_{A}}T}{2}\right)^2 +\|ST^{\sharp_{A}}\|_A^2I + \|ST^{\sharp_{A}}\|_A\left(\frac{S^{\sharp_{A}}S+T^{\sharp_{A}}T}{2}\right) \bigg\|_A^{\frac{1}{2}} \\
&\leq & \frac{1}{2}\left\| S^{\sharp_{A}}S+T^{\sharp_{A}}T \right\|_A.
\end{eqnarray*}
\end{theorem}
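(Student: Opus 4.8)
The plan is to use exactly the lifting strategy that proved Theorem \ref{th1}: transfer known classical Hilbert-space inequalities for operators on $\mathbf{R}(A^{1/2})$ back to $A$-seminorm inequalities on $\mathcal{H}$. Since $\mathcal{B}_A(\mathcal{H}) \subseteq \mathcal{B}_{A^{1/2}}(\mathcal{H})$, Lemma \ref{lem1} produces unique $\widetilde{S}, \widetilde{T} \in \mathcal{B}(\mathbf{R}(A^{1/2}))$ with $Z_A S = \widetilde{S} Z_A$ and $Z_A T = \widetilde{T} Z_A$. All five inequalities then follow by applying to the pair $(\widetilde{S}, \widetilde{T})$ the corresponding classical statements: part $(i)$ from \cite[Th.~2.4]{BP}, part $(ii)$ from \cite[Th.~2.7]{BP}, parts $(iii)$ and $(iv)$ from \cite[Th.~2.17]{BP}, and part $(v)$ from \cite[Cor.~3.16]{BP}.

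For the translation I would first record, once and for all, the dictionary supplied by Lemmas \ref{lem2} and \ref{lem4}: by Lemma \ref{lem4}, $\widetilde{S+T} = \widetilde{S} + \widetilde{T}$, hence $\|S+T\|_A = \|\widetilde{S}+\widetilde{T}\|_{\mathcal{B}(\mathbf{R}(A^{1/2}))}$ by Lemma \ref{lem2}(ii), and $\|S\|_A = \|\widetilde{S}\|_{\mathcal{B}(\mathbf{R}(A^{1/2}))}$, $\|T\|_A = \|\widetilde{T}\|_{\mathcal{B}(\mathbf{R}(A^{1/2}))}$; by Lemma \ref{lem2}(i), $(\widetilde{S})^* = \widetilde{S^{\sharp_A}}$, so $(\widetilde{S})^*\widetilde{T} = \widetilde{S^{\sharp_A}}\,\widetilde{T} = \widetilde{S^{\sharp_A}T}$ and therefore $\|(\widetilde{S})^*\widetilde{T} + (\widetilde{T})^*\widetilde{S}\|_{\mathcal{B}(\mathbf{R}(A^{1/2}))} = \|S^{\sharp_A}T + T^{\sharp_A}S\|_A$; likewise $w\big((\widetilde{S})^*\widetilde{T}\big) = w_A(S^{\sharp_A}T)$ and $w\big(\widetilde{S}(\widetilde{T})^*\big) = w_A(ST^{\sharp_A})$, so the minimum in $(ii)$ transfers verbatim. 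With this dictionary in hand, $(i)$--$(iv)$ are immediate rewrites of the four classical inequalities evaluated at $(\widetilde{S}, \widetilde{T})$, followed by an application of Lemma \ref{lem2}(ii) to return to $\|\cdot\|_A$ and $w_A(\cdot)$.

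For part $(v)$, besides the above I also need that the tilde correspondence respects products and sums, so that $(\widetilde{S})^*\widetilde{S} + (\widetilde{T})^*\widetilde{T} = \widetilde{S^{\sharp_A}S + T^{\sharp_A}T}$, and that squaring commutes with it, $\widetilde{R}^2 = \widetilde{R^2}$ — both contained in Lemma \ref{lem4}; I also need $\widetilde{I_{\mathcal{H}}} = I_{\mathbf{R}(A^{1/2})}$, which follows from $Z_A I = Z_A = I Z_A$ together with the uniqueness clause of Lemma \ref{lem1}. The only mildly delicate point is the scalar $\|ST^{\sharp_A}\|_A$ that appears both inside and outside the seminorm on the right-hand side of $(v)$: one checks $\|ST^{\sharp_A}\|_A = \|\widetilde{ST^{\sharp_A}}\|_{\mathcal{B}(\mathbf{R}(A^{1/2}))} = \|\widetilde{S}(\widetilde{T})^*\|_{\mathcal{B}(\mathbf{R}(A^{1/2}))}$, so the number multiplying $I$ and the operator $\tfrac{1}{2}\big(S^{\sharp_A}S+T^{\sharp_A}T\big)$ inside the $A$-seminorm is precisely the one produced by \cite[Cor.~3.16]{BP} applied to $(\widetilde{S},\widetilde{T})$. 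After that substitution the chain of inequalities in $(v)$ reads off directly.

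I do not anticipate a genuine obstacle: the mathematical content lies entirely in the classical inequalities of \cite{BP}, and the lifting is mechanical. The main care needed is bookkeeping — ensuring that every $\sharp_A$ on the $\mathcal{H}$ side matches a Hilbert-space adjoint on the $\mathbf{R}(A^{1/2})$ side, and that all sums, products, scalar multiples, squares and the identity pass through the tilde correspondence before Lemma \ref{lem2}(ii) is invoked to come back to $\|\cdot\|_A$, $w_A(\cdot)$ and $c_A(\cdot)$.
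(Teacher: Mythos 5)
Your proposal is correct and follows essentially the same route as the paper: the paper proves Theorem \ref{th2} precisely by applying \cite[Th.~2.4, 2.7, 2.17]{BP} and \cite[Cor.~3.16]{BP} to $\widetilde{S},\widetilde{T}$ and translating back via Lemmas \ref{lem1}, \ref{lem2} and \ref{lem4}, exactly as in the proof of Theorem \ref{th1}. Your extra bookkeeping (the scalar $\|ST^{\sharp_A}\|_A$ and $\widetilde{I_{\mathcal{H}}}=I_{\mathbf{R}(A^{1/2})}$) is a careful elaboration of details the paper leaves implicit.
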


\begin{remark}
It follows from Theorem \ref{th2}(i) that if $\|S+T\|_A= \|S\|_A+\|T\|_A$ then $ \|S^{\sharp_{A}}T+T^{\sharp_{A}}S\|_A=2\|S\|_A\|T\|_A.$ Also from  Theorem \ref{th2}(ii) it follows  that if $\|S+T\|_A= \|S\|_A+\|T\|_A$ then $w_A(S^{\sharp_{A}}T)=\|S\|_A\|T\|_A$ and $w_A(ST^{\sharp_{A}})=\|S\|_A\|T\|_A.$

\end{remark}

We next obtain a lower bound for  the $A$-numerical radius that improves on \cite[Th. 1]{kais}.

\begin{theorem}\label{th3.1}
Let $T\in \mathcal{B}_A(\mathcal{H}).$ Then
\begin{eqnarray*}
&&\frac{1}{4}\|T^{\sharp_{A}}T+TT^{\sharp_{A}}\|_A \leq  \frac{1}{8}\bigg( \|T+T^{\sharp_{A}}\|_A^2+\|T-T^{\sharp_{A}}\|_A^2\bigg),~~~\textit{and}\\ 
&& \frac{1}{8}\bigg( \|T+T^{\sharp_{A}}\|_A^2+\|T-T^{\sharp_{A}}\|_A^2\bigg) +\frac{1}{8}c_A^2\big(T+T^{\sharp_{A}}\big)+\frac{1}{8}c_A^2\big(T-T^{\sharp_{A}}\big)
\leq w^2_A(T).
\end{eqnarray*}

\end{theorem}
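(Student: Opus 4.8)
The plan is to follow the four-step transfer philosophy already laid out in the paper: reduce the semi-Hilbertian statement to a classical Hilbert space statement about $\widetilde{T}\in\mathcal{B}(\mathbf{R}(A^{1/2}))$, prove the classical version, and translate back via Lemma \ref{lem2}. Concretely, set $B=\widetilde{T}$, and using Lemma \ref{lem2}(i) together with Lemma \ref{lem4} we have $\widetilde{T+T^{\sharp_A}}=B+B^*$ and $\widetilde{T-T^{\sharp_A}}=B-B^*$, while $\widetilde{T^{\sharp_A}T+TT^{\sharp_A}}=B^*B+BB^*$. By Lemma \ref{lem2}(ii) all the seminorm, numerical radius and Crawford number quantities in the statement equal the corresponding classical quantities for $B$ on the Hilbert space $\mathbf{R}(A^{1/2})$. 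So it suffices to prove, for an arbitrary bounded operator $B$ on a complex Hilbert space, the two inequalities
\begin{eqnarray*}
\tfrac14\|B^*B+BB^*\| \le \tfrac18\big(\|B+B^*\|^2+\|B-B^*\|^2\big),
\end{eqnarray*}
and
\begin{eqnarray*}
\tfrac18\big(\|B+B^*\|^2+\|B-B^*\|^2\big)+\tfrac18 c^2(B+B^*)+\tfrac18 c^2(B-B^*)\le w^2(B).
\end{eqnarray*}

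For the first inequality the key algebraic identity is $(B+B^*)^2+(B-B^*)^2=2(B^*B+BB^*)$, which is the parallelogram-type law for the real and imaginary parts. Write $H=\mathrm{Re}\,B=\tfrac12(B+B^*)$ and $K=\mathrm{Im}\,B=\tfrac1{2i}(B-B^*)$; then $B^*B+BB^*=2(H^2+K^2)$, so the left-hand side is $\tfrac12\|H^2+K^2\|$, and since $H,K$ are self-adjoint with $H^2,K^2\ge 0$ we get $\|H^2+K^2\|\le\|H^2\|+\|K^2\|=\|H\|^2+\|K\|^2$, which rearranges exactly to the claimed bound $\tfrac14\|B^*B+BB^*\|\le\tfrac18(\|B+B^*\|^2+\|B-B^*\|^2)$. (One should double-check the normalization, but this is the routine direction.)

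The second, sharper inequality is where the real work lies, and the plan is to exploit that for a self-adjoint operator $S$ one has $w(B)\ge \tfrac12\|S\|$ when $S$ is the real or imaginary part — but more precisely that $w^2(B)\ge \sup_{\|x\|=1}(\langle Hx,x\rangle^2+\langle Kx,x\rangle^2)$ since $\langle Bx,x\rangle=\langle Hx,x\rangle+i\langle Kx,x\rangle$. The strategy is to pick, for $\varepsilon>0$, a unit vector $x$ nearly attaining $\|H\|=w(H)$ (self-adjointness gives $w(H)=\|H\|$), so $|\langle Hx,x\rangle|$ is close to $\|H\|$; then $\langle Kx,x\rangle^2\ge c^2(K)$ automatically, giving $w^2(B)\ge \|H\|^2+c^2(K)$ in the limit; symmetrically $w^2(B)\ge \|K\|^2+c^2(H)$. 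Averaging these two bounds yields $w^2(B)\ge\tfrac12(\|H\|^2+\|K\|^2)+\tfrac12(c^2(H)+c^2(K))$, and substituting $H=\tfrac12(B+B^*)$, $K=\tfrac1{2i}(B-B^*)$ (note $\|K\|=\tfrac12\|B-B^*\|$ and $c(K)=\tfrac12 c(B-B^*)$, the factor $1/i$ being harmless for a modulus) gives precisely the stated inequality after collecting the $\tfrac18$ factors.

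The main obstacle is making the averaging argument in the second inequality fully rigorous: the vector that nearly maximizes $\langle Hx,x\rangle$ and the vector that nearly maximizes $\langle Kx,x\rangle$ are different, so one cannot simply add $\|H\|^2+\|K\|^2$ at a single $x$. The fix is exactly to take the two one-sided bounds $w^2(B)\ge\|H\|^2+c^2(K)$ and $w^2(B)\ge\|K\|^2+c^2(H)$ separately — each uses only one optimizing sequence and the trivial lower bound $c^2(\cdot)$ at that same point — and then average, which is legitimate. A secondary point to handle with care is passing from the classical inequality for $B=\widetilde T$ back to $T$: one must invoke Lemma \ref{lem2}(ii) for each of $\|T\pm T^{\sharp_A}\|_A$, $c_A(T\pm T^{\sharp_A})$, $\|T^{\sharp_A}T+TT^{\sharp_A}\|_A$ and $w_A(T)$, and Lemmas \ref{lem2}(i) and \ref{lem4} to identify $\widetilde{T\pm T^{\sharp_A}}=B\pm B^*$ and $\widetilde{T^{\sharp_A}T+TT^{\sharp_A}}=B^*B+BB^*$; this is bookkeeping but should be spelled out as in the proof of Theorem \ref{th1}.
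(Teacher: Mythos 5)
Your proposal is correct and is essentially the paper's own argument: the same Cartesian decomposition into ${\rm Re}_A(T)$ and ${\rm Im}_A(T)$, the identity $|\langle Tx,x\rangle_A|^2=\langle {\rm Re}_A(T)x,x\rangle_A^2+\langle {\rm Im}_A(T)x,x\rangle_A^2$, the Crawford-number lower bound on one term, the use of $w=\|\cdot\|$ for the selfadjoint parts, and the averaging of the two one-sided bounds. The only difference is packaging: the paper runs this directly in the $A$-seminorm setting, whereas you first transfer to $\widetilde{T}\in\mathcal{B}(\mathbf{R}(A^{1/2}))$ via Lemmas \ref{lem1}, \ref{lem2}, \ref{lem4} and argue classically, which is equally valid.
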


\begin{proof}
Let $T={\rm Re}_A(T)+ \rm i ~~{\rm Im}_A(T)$, where ${\rm Re}_A(T)=\frac{T+T^{\sharp_A}}{2}$ and ${\rm Im}_A(T)=\frac{T-T^{\sharp_A}}{2\rm i}.$ Then we have that
\begin{eqnarray*}
\frac{1}{4}\|T^{\sharp_{A}}T+TT^{\sharp_{A}}\|_A=\frac{1}{2}\| Re^2_A(T)+Im^2_A(T)\|_A\leq \frac{1}{2}\bigg( \|{\rm Re}_A(T)\|_A^2+\|{\rm Im}_A(T)\|_A^2 \bigg).
\end{eqnarray*}
This implies the first inequality of the theorem.
Now let $x\in \mathcal{H}$ with $\|x\|_A=1.$ Then from the decomposition $T={\rm Re}_A(T)+ \rm i ~~{\rm Im}_A(T)$, we have that
\begin{eqnarray*} 
|\langle Tx,x\rangle_A|^2=|\langle {\rm Re}_A(T)x,x\rangle_A|^2+|\langle {\rm Im}_A(T)x,x\rangle_A|^2.
\end{eqnarray*}
This implies that $$|\langle Tx,x\rangle_A|^2 \geq |\langle {\rm Re}_A(T)x,x\rangle_A|^2+ c_A^2({\rm Im}_A(T)).$$
Since ${\rm Re}_A(T)$ is A-selfadjoint, taking supremum over $\|x\|_A=1$, we get
\begin{eqnarray}\label{3.1.1} 
w^2_A(T)\geq \|{\rm Re}_A(T)\|^2_A+ c_A^2({\rm Im}_A(T)). 
\end{eqnarray}
Similarly, $ {\rm Im}_A(T)$ is A-selfadjoint and we get
\begin{eqnarray}\label{3.1.2} 
w^2_A(T)\geq \|{\rm Im}_A(T)\|^2_A+ c_A^2({\rm Re}_A(T)). 
\end{eqnarray}
Combining (\ref{3.1.1}) and (\ref{3.1.2}), we have
 $$ c_A^2({\rm Im}_A(T))+ c_A^2({\rm Re}_A(T))+ \|{\rm Im}_A(T)\|^2_A+ \|{\rm Re}_A(T)\|^2_A \leq 2w^2_A(T).$$
This implies the second inequality of the theorem, and hence completes the proof.

\end{proof}

\begin{remark}
$(i)$~ It follows from Theorem \ref{th3.1} that if $T\in \mathcal{B}_A(\mathcal{H})$, then
\begin{eqnarray}\label{weak}
	\frac{1}{4}\|T^{\sharp_{A}}T+TT^{\sharp_{A}}\|_A+\frac{1}{8}c_A^2\big(T+T^{\sharp_{A}}\big)+\frac{1}{8}c_A^2\big(T-T^{\sharp_{A}})\leq w^2_A(T).
\end{eqnarray}

$(ii)$~Feki in \cite[Th. 1]{kais} proved that if $T\in \mathcal{B}_A(\mathcal{H})$, then \[\frac{\|T\|_A}{2} \leq \sqrt{\frac{1}{4}\|T\|_A^2+\frac{1}{4}\max \left \{m_A^2(T), m_A^2(T^{\sharp_A})  \right \}}\leq w_A(T),\]
where $m_A(T)=  \inf \{\|Tx\|_A : x\in \mathcal{H}, \|x\|_A= 1\}$.
In \cite[Th. 2.9]{BNP2}, we proved that
	\[\frac{1}{4}\|T\|_A^2+\frac{1}{4}\max \left \{m_A^2(T), m_A^2(T^{\sharp_A})  \right \}\leq \frac{1}{4}\|T^{\sharp_A}T+TT^{\sharp_A}\|_A.\]
Therefore, the first inequality in (\ref{1.2}) is sharper than \cite[Th. 1]{kais}. Clearly, the inequality (\ref{weak}) is sharper than the first inequality in (\ref{1.2}) and so it is sharper than \cite[Th. 1]{kais}. 
\end{remark}

We next prove the following theorem.

\begin{theorem}\label{th3.7}
Let $S,T\in \mathcal{B}_A(\mathcal{H}).$ Then
\[w_A(T^{\sharp_A}S) \leq \frac{1}{2} \|  S^{\sharp_A}S+T^{\sharp_A}T \|_A. \]
\end{theorem}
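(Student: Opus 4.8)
The plan is to follow the same transfer strategy outlined in the excerpt's four steps: reduce the semi-Hilbertian statement to a classical Hilbert space statement via the map $T\mapsto \widetilde T$ on $\mathbf{R}(A^{1/2})$. First I would note that $S,T\in\mathcal B_A(\mathcal H)\subseteq\mathcal B_{A^{1/2}}(\mathcal H)$, so by Lemma \ref{lem1} there exist $\widetilde S,\widetilde T\in\mathcal B(\mathbf{R}(A^{1/2}))$ with $Z_AS=\widetilde SZ_A$ and $Z_AT=\widetilde TZ_A$. By Lemma \ref{lem2}(i), $\widetilde{S^{\sharp_A}}=(\widetilde S)^*$ and $\widetilde{T^{\sharp_A}}=(\widetilde T)^*$, and by Lemma \ref{lem4}, $\widetilde{T^{\sharp_A}S}=(\widetilde T)^*\widetilde S$ and $\widetilde{S^{\sharp_A}S+T^{\sharp_A}T}=(\widetilde S)^*\widetilde S+(\widetilde T)^*\widetilde T$. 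Using Lemma \ref{lem2}(ii), the claim $w_A(T^{\sharp_A}S)\leq\frac12\|S^{\sharp_A}S+T^{\sharp_A}T\|_A$ becomes the purely classical assertion $w\big((\widetilde T)^*\widetilde S\big)\leq\frac12\|(\widetilde S)^*\widetilde S+(\widetilde T)^*\widetilde T\|$ for bounded operators on the Hilbert space $\mathbf{R}(A^{1/2})$.

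Next I would establish that classical inequality: for $B,C\in\mathcal B(\mathcal K)$ on any Hilbert space $\mathcal K$, $w(C^*B)\leq\frac12\|B^*B+C^*C\|$. This is standard: for a unit vector $x$, the Cauchy--Schwarz inequality gives $|\langle C^*Bx,x\rangle|=|\langle Bx,Cx\rangle|\leq\|Bx\|\,\|Cx\|\leq\frac12(\|Bx\|^2+\|Cx\|^2)=\frac12\langle(B^*B+C^*C)x,x\rangle\leq\frac12\|B^*B+C^*C\|$, and taking the supremum over unit $x$ yields the bound. (Alternatively one can cite the special case $X=I$, $S\mapsto S$, $T\mapsto T^{\sharp_A}$-type rearrangement, but the direct Cauchy--Schwarz argument is cleanest.) Applying this with $B=\widetilde S$ and $C=\widetilde T$ gives exactly $w\big((\widetilde T)^*\widetilde S\big)\leq\frac12\|(\widetilde S)^*\widetilde S+(\widetilde T)^*\widetilde T\|$.

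Finally I would translate back: chaining the identifications above, $w_A(T^{\sharp_A}S)=w\big(\widetilde{T^{\sharp_A}S}\big)=w\big((\widetilde T)^*\widetilde S\big)\leq\frac12\|(\widetilde S)^*\widetilde S+(\widetilde T)^*\widetilde T\|=\frac12\|\widetilde{S^{\sharp_A}S+T^{\sharp_A}T}\|=\frac12\|S^{\sharp_A}S+T^{\sharp_A}T\|_A$, which is the desired inequality.

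I do not anticipate a genuine obstacle here: every ingredient is already in the excerpt. The only point requiring a little care is bookkeeping with the tilde map — making sure to invoke Lemma \ref{lem4} for the products $\widetilde{T^{\sharp_A}S}$ and the sum inside the norm, and Lemma \ref{lem2}(i) for the adjoint before Lemma \ref{lem2}(ii) converts seminorm/numerical radius quantities. One could also prove the inequality directly in the semi-Hilbertian setting by running the same Cauchy--Schwarz computation with $\langle\cdot,\cdot\rangle_A$ (using $\langle T^{\sharp_A}Sx,x\rangle_A=\langle Sx,Tx\rangle_A$), which avoids the $\mathbf{R}(A^{1/2})$ machinery entirely; but to stay consistent with the paper's stated method I would present the transfer argument.
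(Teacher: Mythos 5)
Your proposal is correct and follows essentially the same route as the paper: reduce to operators $\widetilde S,\widetilde T$ on $\mathbf{R}(A^{1/2})$ via Lemmas \ref{lem1}, \ref{lem2} and \ref{lem4}, apply the classical bound $2w(C^*B)\leq\|B^*B+C^*C\|$, and translate back. The only difference is that the paper cites this classical inequality from the literature (Dragomir), whereas you supply the short Cauchy--Schwarz proof of it yourself, which is a harmless and correct addition.
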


\begin{proof}
It was proved in \cite{D} that if $S,T\in \mathcal{B}(\mathcal{H})$, then $2w(T^*S)\leq \|S^*S+T^*T\|.$
Since $\mathcal{B}_A(\mathcal{H}) \subseteq \mathcal{B}_{A{^{1/2}}}(\mathcal{H})$, so $S,T\in \mathcal{B}_{A{^{1/2}}}(\mathcal{H}).$ Therefore, it follows from Lemma \ref{lem1} that there exists unique $\widetilde{S}$ in $\mathcal{B}(\mathbf{R}(A^{1/2}))$ such that $Z_AS =\widetilde{S}Z_A$. Similarly, there exists unique  $\widetilde{T}$ in $\mathcal{B}(\mathbf{R}(A^{1/2}))$ such that $Z_AT =\widetilde{T}Z_A$. Therefore, from the above inequality we have that
\begin{eqnarray*}
w\left( \big(\widetilde{T}\big)^*\widetilde{S}  \right) &\leq & \frac{1}{2} \|\big(\widetilde{S}\big)^*\widetilde{S}+ \big(\widetilde{T}\big)^*\widetilde{T}  \|_{\mathcal{B}(\mathbf{R}(A^{1/2}))}\\
\Rightarrow w\left( \widetilde{T^{\sharp_A}}\widetilde{S} \right)  &\leq & \frac{1}{2} \|\widetilde{S^{\sharp_A}}\widetilde{S}+ \widetilde{T^{\sharp_A}} \widetilde{T}  \|_{\mathcal{B}(\mathbf{R}(A^{1/2}))},~~\mbox{by Lemma \ref{lem2}(i)}\\
\Rightarrow w\left( \widetilde{T^{\sharp_A}S } \right) &\leq & \frac{1}{2}  \| \widetilde{S^{\sharp_A}S}+\widetilde{T^{\sharp_A}T}\|_{\mathcal{B}(\mathbf{R}(A^{1/2}))},~~\mbox{by Lemma \ref{lem4}}\\
\Rightarrow w\left( \widetilde{T^{\sharp_A}S } \right) &\leq & \frac{1}{2}  \| \widetilde {S^{\sharp_A}S+T^{\sharp_A}T }\|_{\mathcal{B}(\mathbf{R}(A^{1/2}))},~~\mbox{by Lemma \ref{lem4}}\\
\Rightarrow  w_A\left(T^{\sharp_A}S\right) &\leq& \frac{1}{2} \|S^{\sharp_A}S+T^{\sharp_A}T\|_A,~~\mbox{by Lemma \ref{lem2}(ii)}.
\end{eqnarray*}
This completes the proof.
\end{proof}
Proceeding in the same way we can prove the following theorem by using the corresponding results from \cite[Th. 2.10, 2.13, 2.18]{BP} and \cite[Cor. 2.21, 3.5]{BP},  respectively.

\begin{theorem}\label{th3.2}
Let $T\in \mathcal{B}_A(\mathcal{H}).$ Then the following inequalities hold:
\begin{eqnarray*}
(i)~   w_A^2(T) & \geq & \frac{1}{8}\bigg( \max \big\{ \|T+T^{\sharp_{A}}\|_A^2,\|T-T^{\sharp_{A}}\|_A^2 \big\}+\|T+T^{\sharp_{A}}\|_A\|T-T^{\sharp_{A}}\|_A \bigg), \\
(ii)~w_A^2(T) & \geq & \frac{1}{4\sqrt{2}} \bigg(\big\|T+T^{\sharp_{A}}\big\|_A^4+\big \|T-T^{\sharp_{A}}\big\|_A^4\bigg)^{\frac{1}{2}} \geq \frac{1}{4}\|T^{\sharp_{A}}T+TT^{\sharp_{A}}\|_A,\\
(iii)~ w_A^2(T) & \geq & \frac{1}{8}\left[ \left(\|T+T^{\sharp_{A}}\|_A^2+\|T-T^{\sharp_{A}}\|_A^2\right)^2 + \frac{1}{2}\left(\|T+T^{\sharp_{A}}\|_A^2-\|T-T^{\sharp_{A}}\|_A^2\right)^2\right]^{\frac{1}{2}},
\end{eqnarray*}
\begin{eqnarray*}
(iv)~\frac{1}{2}\|T^{\sharp_{A}}T+TT^{\sharp_{A}}\|_A -  \frac{1}{4}\bigg\|(T+T^{\sharp_{A}})^2 (T-T^{\sharp_{A}})^2 \bigg\|_A^{\frac{1}{2}}\leq w^2_A(T)  \leq 	\frac{1}{2}\|T^{\sharp_{A}}T+TT^{\sharp_{A}}\|_A,
\end{eqnarray*}
and
\begin{eqnarray*}
(v)~w^2_A(T) &\leq &\frac{1}{\sqrt{3}} \left\|  \left(\frac{T^{\sharp_{A}}T+TT^{\sharp_{A}}}{2}\right)^2 +w_A^4(T)I + w_A^2(T)\left(\frac{T^{\sharp_{A}}T+TT^{\sharp_{A}}}{2}\right) \right\|_A^{\frac{1}{2}} \\
& \leq  & \frac{1}{2} \left\| T^{\sharp_{A}}T+TT^{\sharp_{A}}  \right\|_A.
\end{eqnarray*}
\end{theorem}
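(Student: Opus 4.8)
The plan is to follow the four-step transfer method described in the introduction: reduce each inequality to a classical numerical-radius inequality for the induced operator $\widetilde T\in\mathcal{B}(\mathbf{R}(A^{1/2}))$, invoke the known scalar-Hilbert-space results cited from \cite{BP}, and then pull back using Lemmas \ref{lem1}, \ref{lem2}, \ref{lem4}. Concretely, given $T\in\mathcal{B}_A(\mathcal{H})\subseteq\mathcal{B}_{A^{1/2}}(\mathcal{H})$, Lemma \ref{lem1} furnishes a unique $\widetilde T\in\mathcal{B}(\mathbf{R}(A^{1/2}))$ with $Z_AT=\widetilde T Z_A$. By Lemma \ref{lem2}(i), $\widetilde{T^{\sharp_A}}=(\widetilde T)^*$, so $\widetilde{T+T^{\sharp_A}}=\widetilde T+(\widetilde T)^*=2\,\mathrm{Re}(\widetilde T)$ and $\widetilde{T-T^{\sharp_A}}=\widetilde T-(\widetilde T)^*=2i\,\mathrm{Im}(\widetilde T)$, while $\widetilde{T^{\sharp_A}T+TT^{\sharp_A}}=(\widetilde T)^*\widetilde T+\widetilde T(\widetilde T)^*$. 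By Lemma \ref{lem2}(ii) all the relevant quantities ($\|\cdot\|_A$, $w_A$, $c_A$) coincide with their classical counterparts for $\widetilde T$ on $\mathbf{R}(A^{1/2})$.

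For each of the five parts I would do the following. Parts (i)--(iv) follow by applying, respectively, \cite[Th. 2.10]{BP}, \cite[Th. 2.13]{BP}, \cite[Th. 2.18]{BP}, and \cite[Cor. 2.21]{BP} to $\widetilde T$: these give, for any bounded operator $R$ on a Hilbert space, bounds for $w^2(R)$ in terms of $\|R+R^*\|$, $\|R-R^*\|$, and $\|R^*R+RR^*\|$, which are exactly the templates appearing here. After writing the classical inequality for $R=\widetilde T$, I replace $R+R^*$ by $\widetilde{T+T^{\sharp_A}}$, $R-R^*$ by $\widetilde{T-T^{\sharp_A}}$, $R^*R+RR^*$ by $\widetilde{T^{\sharp_A}T+TT^{\sharp_A}}$ (legitimate by Lemmas \ref{lem2}(i) and \ref{lem4}), take norms/numerical radii, and invoke Lemma \ref{lem2}(ii) to land back in the semi-Hilbertian setting. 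Part (v) is the same scheme using \cite[Cor. 3.5]{BP}, where one additionally uses that $\widetilde{\left(\tfrac{T^{\sharp_A}T+TT^{\sharp_A}}{2}\right)^2}=\left(\tfrac{(\widetilde T)^*\widetilde T+\widetilde T(\widetilde T)^*}{2}\right)^2$ and $\widetilde{w_A^2(T)I}=w^2(\widetilde T)\,\widetilde{I}$, so that the operator seminorm inside the bracket matches the classical operator norm of the corresponding expression in $\widetilde T$; then Lemma \ref{lem2}(ii) finishes it.

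The only genuinely delicate point, and the one I would spell out carefully, is that the quantities inside the seminorms in parts (iii), (iv), and (v) are themselves nonlinear expressions (squares, products, and $w_A^2(T)I$), so one must check that $\widetilde{(\cdot)}$ commutes with these operations. This is handled entirely by Lemma \ref{lem4}: since $\widetilde{ST}=\widetilde S\widetilde T$ and $\widetilde{S+\lambda T}=\widetilde S+\lambda\widetilde T$, any polynomial expression in $T,T^{\sharp_A},I$ maps under $\widetilde{(\cdot)}$ to the same polynomial in $\widetilde T,(\widetilde T)^*,\widetilde I$; in particular $\widetilde{(T+T^{\sharp_A})^2(T-T^{\sharp_A})^2}=\bigl(\widetilde T+(\widetilde T)^*\bigr)^2\bigl(\widetilde T-(\widetilde T)^*\bigr)^2$ for part (iv). Apart from this bookkeeping, no new estimate is required — the content lies entirely in the classical inequalities of \cite{BP}, and the proof is a routine, if somewhat lengthy, transcription. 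I would therefore present one representative part in full detail (say (ii) or (iv)) and remark that the remaining parts follow verbatim by the same argument applied to the cited classical results.
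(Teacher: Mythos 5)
Your proposal is correct and is essentially the paper's own argument: the authors prove Theorem \ref{th3.2} exactly by the transfer scheme (Lemma \ref{lem1} to pass to $\widetilde T$, the classical inequalities \cite[Th.\ 2.10, 2.13, 2.18]{BP} and \cite[Cor.\ 2.21, 3.5]{BP} applied to $\widetilde T$, then Lemmas \ref{lem4} and \ref{lem2} to pull back), just as in their proofs of Theorems \ref{th1} and \ref{th3.7}. Your extra care in checking that $\widetilde{(\cdot)}$ respects the polynomial expressions in parts (iii)--(v) is exactly the bookkeeping the paper leaves implicit, so nothing is missing.
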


\begin{remark}
$(a)$ It is clear that 
$$ \|T+T^{\sharp_{A}}\|_A^2+\|T-T^{\sharp_{A}}\|_A^2 \leq \max \big\{ \|T+T^{\sharp_{A}}\|_A^2,\|T-T^{\sharp_{A}}\|_A^2 \big\}+\|T+T^{\sharp_{A}}\|_A\|T-T^{\sharp_{A}}\|_A.$$ Therefore, the inequality in Theorem \ref{th3.2}$(i)$  improves on the first inequality in (\ref{1.2}). \\
$(b)$ By using the first inequality in Theorem \ref{th3.1}, we conclude that the inequality in Theorem \ref{th3.2}$(iii)$ is stronger than the left hand inequality in (\ref{1.2}). \\
$(c)$ It follows from Theorem \ref{th3.2}$(iv)$ that if $\|(T+T^{\sharp_{A}})^2 (T-T^{\sharp_{A}})^2 \|_A=0$, i.e., if $A(T+T^{\sharp_{A}})^2 (T-T^{\sharp_{A}})^2=0$  then $w^2_A(T)  = 	\frac{1}{2}\|T^{\sharp_{A}}T+TT^{\sharp_{A}}\|_A.$\\
$(d)$ Clearly, the inequality in Theorem \ref{th3.2}$(v)$ is stronger than the right hand inequality in (\ref{1.2}). 
\end{remark}

Our next theorem is an improvement of  \cite[Th. 7]{kais01}.

\begin{theorem}\label{th-}
Let $T\in \mathcal{B}_A(\mathcal{H})$. Then
\begin{eqnarray*}
 w_A(T) &\leq &   \frac{1}{\sqrt{2}} \left[\|T^2\|_A^{\frac{1}{2}} \left(  \frac{1}{2}\|T\|_A+ \frac{1}{2}\|T^2\|_A^{\frac{1}{2}}  \right) +\frac{1}{2}\| TT^{\sharp_A}+T^{\sharp_A}T\|_A  \right]^{\frac{1}{2}}\\
& \leq&    \frac{1}{2}\|T\|_A+ \frac{1}{2}\|T^2\|_A^{\frac{1}{2}}. 
\end{eqnarray*}
\end{theorem}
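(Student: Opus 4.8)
The plan is to follow the four-step scheme described at the end of Section~1: transfer everything to the Hilbert space $\mathbf{R}(A^{1/2})$, invoke a classical (unweighted) numerical radius inequality there, and pull the conclusion back. Since $T\in\mathcal{B}_A(\mathcal{H})\subseteq\mathcal{B}_{A^{1/2}}(\mathcal{H})$, Lemma~\ref{lem1} produces a unique $\widetilde{T}\in\mathcal{B}(\mathbf{R}(A^{1/2}))$ with $Z_AT=\widetilde{T}Z_A$, and $\mathbf{R}(A^{1/2})$ is a genuine Hilbert space, so any classical operator norm / numerical radius inequality applies verbatim to $\widetilde{T}$. The whole argument is then a matter of applying the right classical inequality and translating back with Lemmas~\ref{lem2} and~\ref{lem4}.

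The classical ingredient I would invoke is the known refinement of Kittaneh's estimate $w(R)\le\frac{1}{2}\big(\|R\|+\|R^2\|^{\frac{1}{2}}\big)$, namely that for every operator $R$ on a Hilbert space,
\[
w(R)\le\frac{1}{\sqrt2}\left[\|R^2\|^{\frac{1}{2}}\left(\frac{1}{2}\|R\|+\frac{1}{2}\|R^2\|^{\frac{1}{2}}\right)+\frac{1}{2}\|RR^*+R^*R\|\right]^{\frac{1}{2}}\le\frac{1}{2}\|R\|+\frac{1}{2}\|R^2\|^{\frac{1}{2}},
\]
where the second inequality rests on the auxiliary bound $\|RR^*+R^*R\|\le\|R\|^2+\|R^2\|$ (together with $\|R^2\|^{\frac{1}{2}}\le\|R\|$). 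I would apply this to $R=\widetilde{T}$ on the Hilbert space $\mathbf{R}(A^{1/2})$.

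It then remains to rewrite each term in the $A$-language. By Lemma~\ref{lem2}(ii) we have $w(\widetilde{T})=w_A(T)$ and $\|\widetilde{T}\|_{\mathcal{B}(\mathbf{R}(A^{1/2}))}=\|T\|_A$. By Lemma~\ref{lem4}, $\widetilde{T}^{\,2}=\widetilde{T}\,\widetilde{T}=\widetilde{T^2}$, so $\|\widetilde{T}^{\,2}\|=\|T^2\|_A$ by Lemma~\ref{lem2}(ii). By Lemma~\ref{lem2}(i), $(\widetilde{T})^*=\widetilde{T^{\sharp_A}}$, whence $\widetilde{T}(\widetilde{T})^*+(\widetilde{T})^*\widetilde{T}=\widetilde{TT^{\sharp_A}}+\widetilde{T^{\sharp_A}T}=\widetilde{TT^{\sharp_A}+T^{\sharp_A}T}$ by Lemma~\ref{lem4}, and therefore $\|\widetilde{T}(\widetilde{T})^*+(\widetilde{T})^*\widetilde{T}\|=\|TT^{\sharp_A}+T^{\sharp_A}T\|_A$ by Lemma~\ref{lem2}(ii). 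Substituting these four identities into the displayed chain gives precisely the two asserted inequalities for $w_A(T)$.

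The proof is essentially routine once the correct classical inequality is identified; the only points demanding care are, first, pinning down that classical statement — in particular the auxiliary inequality $\|RR^*+R^*R\|\le\|R\|^2+\|R^2\|$, which is exactly what secures the second inequality and hence the claimed improvement of \cite[Th.~7]{kais01} — and, second, combining Lemma~\ref{lem2}(i) with Lemma~\ref{lem4} correctly, so that $\widetilde{T^2}$ is identified with $\widetilde{T}^{\,2}$ and $\widetilde{TT^{\sharp_A}+T^{\sharp_A}T}$ with $\widetilde{T}(\widetilde{T})^*+(\widetilde{T})^*\widetilde{T}$.
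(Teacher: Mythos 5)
Your proposal is correct and follows essentially the same route as the paper: transfer $T$ to $\widetilde{T}\in\mathcal{B}(\mathbf{R}(A^{1/2}))$ via Lemma~\ref{lem1}, apply the classical refinement of Kittaneh's bound (the content of \cite[Cor.~1, Remark~5]{bag}) to $\widetilde{T}$, and translate back with Lemmas~\ref{lem2} and~\ref{lem4}. The only cosmetic difference is that you spell out the justification of the second inequality via $\|RR^*+R^*R\|\le\|R\|^2+\|R^2\|$ and $\|R^2\|^{1/2}\le\|R\|$, where the paper simply cites \cite[Remark~5]{bag}.
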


\begin{proof}
Since $\mathcal{B}_A(\mathcal{H}) \subseteq \mathcal{B}_{A{^{1/2}}}(\mathcal{H})$, so $T\in \mathcal{B}_{A{^{1/2}}}(\mathcal{H}).$ Therefore, it follows from Lemma \ref{lem1} that there exists unique $\widetilde{T}$ in $\mathcal{B}(\mathbf{R}(A^{1/2}))$ such that $Z_AT =\widetilde{T}Z_A$. Therefore, following \cite[Cor. 1]{bag} we have
\begin{eqnarray*}
w^2(\widetilde{T}) &\leq & \frac{1}{2} \|\widetilde{T}^2\|^{\frac{1}{2}}_{\mathcal{B}(\mathbf{R}(A^{1/2}))} \left( \frac{1}{2}\|\widetilde{T}\|_{\mathcal{B}(\mathbf{R}(A^{1/2}))}+\frac{1}{2}\|\widetilde{T}^2\|^{\frac{1}{2}}_{\mathcal{B}(\mathbf{R}(A^{1/2}))}   \right)\\
&& +\frac{1}{4}\|(\widetilde{T})^*\widetilde{T}+\widetilde{T}(\widetilde{T})^*\|_{\mathcal{B}(\mathbf{R}(A^{1/2}))}. 
\end{eqnarray*}
Now by using Lemma \ref{lem4} and Lemma \ref{lem2} we get the first inequality of the theorem. The second inequality follows from \cite[Remark 5]{bag} by using similer arguments as above.
\end{proof}

\begin{remark}
Feki in \cite[Th. 7]{kais01} proved that $$w_A(T) \leq \frac{1}{2}\|T\|_A+ \frac{1}{2}\|T^2\|_A^{\frac{1}{2}}. $$ Clearly, the first inequality in Theorem \ref{th-} is sharper than that in \cite[Th. 7]{kais01}.

\end{remark}

The final theorem of this paper is  a well-known result \cite{BPN, MXZ}, which is given here to highlight the advantage of the technique used in this paper.

\begin{theorem}\label{th-B&M}
Let $T\in \mathcal{B}_A(\mathcal{H})$. Then
\begin{eqnarray*}
	\frac{1}{16}\|TT^{\sharp_A}+T^{\sharp_A}T\|_A^2+	\frac{1}{4}c_A\left(\big({\rm Re}_A(T^2)\big)^2\right) &\leq & w^4_A(T)\\
	&\leq & \frac{1}{8}\| TT^{\sharp_A}+T^{\sharp_A}T\|^2_A+\frac{1}{2}w^2_A(T^2). 
\end{eqnarray*}
\end{theorem}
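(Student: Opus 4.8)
The plan is to follow the four-step strategy outlined at the end of the introduction: lift $T$ to $\widetilde T \in \mathcal{B}(\mathbf{R}(A^{1/2}))$ via Lemma~\ref{lem1}, invoke the known classical Hilbert-space version of this inequality for $\widetilde T$, and then translate back using Lemmas~\ref{lem2} and~\ref{lem4}. The classical inequality in question is the fourth-power numerical radius bound: for $B \in \mathcal{B}(\mathcal{K})$ on a Hilbert space $\mathcal{K}$,
\begin{eqnarray*}
\frac{1}{16}\|BB^*+B^*B\|^2 + \frac{1}{4}c\left(\big({\rm Re}(B^2)\big)^2\right) \leq w^4(B) \leq \frac{1}{8}\|BB^*+B^*B\|^2 + \frac{1}{2}w^2(B^2),
\end{eqnarray*}
which appears in the work of Bhunia--Paul and Moslehian--Xu--Zhang cited as \cite{BPN, MXZ}. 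So the first step is simply to state this classical result (with a reference) applied to $\widetilde T$ on the Hilbert space $\mathbf{R}(A^{1/2})$.

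Next I would carry out the dictionary translation termwise. By Lemma~\ref{lem2}(ii), $w(\widetilde T) = w_A(T)$, so $w^4(\widetilde T) = w^4_A(T)$. For the operator-norm terms, Lemma~\ref{lem2}(i) gives $\widetilde{T^{\sharp_A}} = (\widetilde T)^*$, and Lemma~\ref{lem4} gives $\widetilde{TT^{\sharp_A}} = \widetilde T (\widetilde T)^*$ and $\widetilde{T^{\sharp_A}T} = (\widetilde T)^* \widetilde T$, hence $\widetilde{TT^{\sharp_A} + T^{\sharp_A}T} = \widetilde T(\widetilde T)^* + (\widetilde T)^*\widetilde T$; together with Lemma~\ref{lem2}(ii) this yields $\|\widetilde T(\widetilde T)^* + (\widetilde T)^*\widetilde T\|_{\mathcal{B}(\mathbf{R}(A^{1/2}))} = \|TT^{\sharp_A} + T^{\sharp_A}T\|_A$. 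For the $w^2(B^2)$ term, $\widetilde{T^2} = \widetilde T \widetilde T = (\widetilde T)^2$ by Lemma~\ref{lem4}, so $w^2((\widetilde T)^2) = w^2_A(T^2)$. The one term needing slight care is the Crawford-number term $c\big(({\rm Re}(\widetilde T^2))^2\big)$: here ${\rm Re}(\widetilde T^2) = \frac{1}{2}(\widetilde T^2 + (\widetilde T^2)^*) = \frac{1}{2}(\widetilde{T^2} + \widetilde{(T^2)^{\sharp_A}}) = \widetilde{{\rm Re}_A(T^2)}$ using Lemma~\ref{lem2}(i) and Lemma~\ref{lem4}, and then $({\rm Re}(\widetilde T^2))^2 = \widetilde{({\rm Re}_A(T^2))^2}$ again by Lemma~\ref{lem4}, so that $c\big(({\rm Re}(\widetilde T^2))^2\big) = c_A\big(({\rm Re}_A(T^2))^2\big)$ by the Crawford-number clause of Lemma~\ref{lem2}(ii). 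Substituting all of these into the classical inequality gives exactly the stated two-sided bound.

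I do not expect any genuine obstacle here, since the paper explicitly flags this theorem as a known result included "to highlight the advantage of the technique." The only mild subtlety is bookkeeping with the $\sharp_A$ operation inside compositions and inside ${\rm Re}_A$, i.e.\ being careful that every occurrence of $T^*$, $\text{Re}$, products, and powers in the classical statement corresponds under the $\;\widetilde{\cdot}\;$ map to the intended semi-Hilbertian expression; this is handled uniformly by Lemma~\ref{lem2}(i), Lemma~\ref{lem2}(ii), and the multiplicativity/additivity in Lemma~\ref{lem4}. One should also note at the outset that $\mathcal{B}_A(\mathcal{H}) \subseteq \mathcal{B}_{A^{1/2}}(\mathcal{H})$ so that Lemma~\ref{lem1} applies and $\widetilde T$ exists. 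With those remarks in place the proof is a direct transcription.
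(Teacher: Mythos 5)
Your proposal is correct and follows essentially the same route as the paper: lift $T$ to $\widetilde{T}\in\mathcal{B}(\mathbf{R}(A^{1/2}))$ via Lemma~\ref{lem1}, apply the classical fourth-power numerical radius bound (which the paper takes from \cite[Th. 8]{bag} rather than \cite{BPN,MXZ}, the latter being the $A$-versions), and translate back termwise with Lemmas~\ref{lem2} and~\ref{lem4}. Your careful handling of the Crawford-number term, identifying ${\rm Re}(\widetilde{T}^2)=\widetilde{{\rm Re}_A(T^2)}$, is exactly the step the paper leaves implicit.
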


\begin{proof}
Since $\mathcal{B}_A(\mathcal{H}) \subseteq \mathcal{B}_{A{^{1/2}}}(\mathcal{H})$, so $T\in \mathcal{B}_{A{^{1/2}}}(\mathcal{H}).$ Therefore, it follows from Lemma \ref{lem1} that there exists unique $\widetilde{T}$ in $\mathcal{B}(\mathbf{R}(A^{1/2}))$ such that $Z_AT =\widetilde{T}Z_A$. It follows from \cite[Th. 8]{bag} that 
\begin{eqnarray*}
	\frac{1}{16}\|(\widetilde{T})^*\widetilde{T}+\widetilde{T}(\widetilde{T})^*\|_{\mathcal{B}(\mathbf{R}(A^{1/2}))}^2+	\frac{1}{4}c\left(\big({\rm Re}_A(\widetilde{T}^2)\big)^2\right) &\leq & w^4(\widetilde{T})
\end{eqnarray*}
and 
\begin{eqnarray*}	
w^4(\widetilde{T}) &\leq & \frac{1}{8}\|(\widetilde{T})^*\widetilde{T}+\widetilde{T}(\widetilde{T})^*\|_{\mathcal{B}(\mathbf{R}(A^{1/2}))}^2+\frac{1}{2}w^2(\widetilde{T}^2). 
\end{eqnarray*}
Thus, the required inequalities follow from  Lemma \ref{lem4} and Lemma \ref{lem2}.
\end{proof}

\begin{remark}
The proof  of the inequalities given here is simple and short in comparison to the proofs  of the same in  \cite{BPN, MXZ}. We conclude that the same holds true for many existing proofs on $A$-operator seminorm and $A$-numerical radius inequalities for semi-Hilbertian space operators.

\end{remark}

\bibliographystyle{amsplain}

\end{document}